\newtheorem*{propositiona}{Proposition}
\newtheorem*{theorema}{Theorem 1}
\newtheorem*{theoremb}{Theorem 2}
\newtheorem*{theoremc}{Theorem 3}
\newtheorem*{theoremd}{Change of Variable Formula}
\newtheorem*{theoreme}{Proof, Change of Variable Formula}
\newcommand{\R}{{\mathbb{R}}}
\newcommand{\taf}{{\hskip 5pt} $\blacksquare$
                  \renewcommand{\qedsymbol}{}}
\begin{document}
\title{The change of variable formula for the Riemann integral}
\author{Alberto Torchinsky}
\date{}
\maketitle

This note concerns  the  general formulation by Preiss and Uher \cite{Pries}
of  Kestelman's result pertaining the  change of variable, or substitution, formula for the Riemann integral  \cite{Davies}, \cite{Kestelman}.

Specifically, we prove
\begin{theoremd}  
 Let $\varphi$ be a bounded, Riemann integrable function defined on an interval $I=[a,b]$, and let $\Phi$ be an indefinite integral of $\varphi$ on $I$.
Let  $f$ be bounded on $\Phi(I)$, the range of $\Phi$. Then,  $f$ is   Riemann integrable on $\Phi(I)$   iff $ f(\Phi)\varphi $ is Riemann integrable on $I$, and, in that case, with $\mathcal I=[\Phi(a), \Phi(b)]$, 
\begin{equation} \int_{\mathcal I} f= \int_I  f(\Phi)\,\varphi \,.
\end{equation}
\end{theoremd}

Our approach  is   self-contained and  elementary, since only basic properties in the  theory of the Riemann integral  are   invoked. Developments in this area since Kestelman's influential   paper,
 as well as  the various strategies utilized, 
can be found in \cite{Bagby},  \cite{Lopez}, \cite{Navra}, \cite{Sarkel}, [14,15], \cite{Thomson1},  and the references therein.

We begin by introducing some definitions and notations. 
Fix a closed finite interval  $I=[a,b]\subset \R$, and let $\Phi$ be a continuous increasing function defined on $I$. For a partition $\mathcal P=\{I_k\}$ of $I$, where $I_k=[x_{k,l},x_{k,r}]$, and a bounded function $f$  on $I$, let  
 $U(f,\Phi, \mathcal{P})$ and  $L(f,\Phi, \mathcal{P})$ denote  the  upper and lower Riemann sums of $f$ with respect to $\Phi$ on $I$ along $\mathcal{P}$,  i.e.,
\[U(f,\Phi, \mathcal{P})=\sum_k \big(\sup_{I_k}f\big)\, \big(\Phi(x_{k,r})- \Phi(x_{k,l})\big),\]
and  
\[ L(f,\Phi, \mathcal{P})=\sum_k \big(\inf_{I_k}f\big)\, \big(\Phi(x_{k,r})- \Phi(x_{k,l})\big),
\]
respectively, and set
 \[ U(f,\Phi)=\inf_{\mathcal{P}}\, U(f,\Phi,\mathcal{P})\,,\quad{\rm{and}} \quad
 L(f,\Phi)=\sup_{\mathcal{P}} L(f,\Phi, \mathcal{P})\,.
 \]
 
We say that $f$ is Riemann integrable with respect to $\Phi$ on $I$ if $U(f,\Phi)=L(f,\Phi)$, and in this case the common value is denoted $\int_I f\,d\Phi$, the Riemann integral of $f$ with respect to $\Phi$ on $I$. 

When $\Phi(x)=x$ one gets the usual Riemann integral on $I$, and  $\Phi$ is omitted in the above notations. And, throughout this note, when it is clear from the context, integrable means Riemann integrable with respect to $\Phi(x)=x$, and Riemann-Stieltjes integrable means integrable with respect to a more general $\Phi$.
 
 We begin by proving a working characterization of integrability \cite {Bruckner}, \cite{Hunter},
 \cite{Torch},

\begin{propositiona} Let $f$ be a bounded function defined on $I$. Then, 
 $f$ is Riemann integrable with respect to $\Phi$  on $I$ iff,  given $\varepsilon >0$, there is a partition ${\mathcal{P}}$ of $I$, which 
may  depend on $\varepsilon$,  such that 
\begin{equation}  U(f,\Phi,{\mathcal{P}})-L(f,\Phi,{\mathcal {P}})\le \varepsilon\,.
\end{equation}

Furthermore, a sequential characterization holds, to wit,
 (2) is equivalent to the existence of a sequence $\{\mathcal P_n\}$ of partitions of $I$ such that
\begin{equation}
\lim_{n}  \big( U(f,\Phi,\mathcal { P}_n) - L(f,\Phi, \mathcal {P}_n) \big) = 0\,,
\end{equation}
and, in this case,
\begin{equation} \lim_{n} U(f,\Phi, \mathcal P_n) = \lim_{n}L(f, \Phi, {\mathcal P}_n)=
\int_I f\,d\Phi.
\end{equation}
\end{propositiona}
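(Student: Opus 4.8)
The plan is to reduce everything to the two classical building blocks of the theory, adapted to the increasing function $\Phi$: first, that passing to a refinement can only lower an upper sum and raise a lower sum; and second, that every lower sum is dominated by every upper sum. Both rest on the fact that, because $\Phi$ is increasing, each increment $\Phi(x_{k,r})-\Phi(x_{k,l})$ is nonnegative, so the familiar Riemann estimates transfer verbatim with $x_{k,r}-x_{k,l}$ replaced by $\Phi(x_{k,r})-\Phi(x_{k,l})$. I would first record the refinement inequality
\[ L(f,\Phi,\mathcal P)\le L(f,\Phi,\mathcal Q)\le U(f,\Phi,\mathcal Q)\le U(f,\Phi,\mathcal P) \]
whenever $\mathcal Q$ refines $\mathcal P$, and deduce from it, via a common refinement, that $L(f,\Phi,\mathcal P)\le U(f,\Phi,\mathcal Q)$ for arbitrary partitions $\mathcal P,\mathcal Q$. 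Taking suprema and infima then yields $L(f,\Phi)\le U(f,\Phi)$, together with the chain $L(f,\Phi,\mathcal P)\le L(f,\Phi)\le U(f,\Phi)\le U(f,\Phi,\mathcal P)$ valid for any single partition $\mathcal P$.

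For the equivalence in (2), the sufficiency direction is immediate from the last chain: if $U(f,\Phi,\mathcal P)-L(f,\Phi,\mathcal P)\le\varepsilon$, then $0\le U(f,\Phi)-L(f,\Phi)\le\varepsilon$, and letting $\varepsilon\to 0$ forces $U(f,\Phi)=L(f,\Phi)$, i.e. integrability. For necessity, assume $U(f,\Phi)=L(f,\Phi)$; given $\varepsilon>0$, choose by the definitions of infimum and supremum partitions $\mathcal P_1,\mathcal P_2$ with $U(f,\Phi,\mathcal P_1)\le U(f,\Phi)+\varepsilon/2$ and $L(f,\Phi,\mathcal P_2)\ge L(f,\Phi)-\varepsilon/2$, and let $\mathcal P$ be their common refinement. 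The refinement inequality makes $\mathcal P$ inherit both estimates, and subtracting them (using $U(f,\Phi)=L(f,\Phi)$) gives $U(f,\Phi,\mathcal P)-L(f,\Phi,\mathcal P)\le\varepsilon$.

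For the sequential statement, the implication (2) $\Rightarrow$ (3) follows by applying (2) with $\varepsilon=1/n$ to produce the partitions $\mathcal P_n$, while (3) $\Rightarrow$ (2) is trivial, since (3) supplies, for each $\varepsilon>0$, an index $n$ whose partition satisfies (2). Finally, once integrability is known, the identity (4) is a squeeze: for every $n$ one has $L(f,\Phi,\mathcal P_n)\le L(f,\Phi)=U(f,\Phi)\le U(f,\Phi,\mathcal P_n)$, and since the gap $U(f,\Phi,\mathcal P_n)-L(f,\Phi,\mathcal P_n)$ tends to $0$, both sums converge to the common value $\int_I f\,d\Phi$. I expect no genuine obstacle here; the only point demanding care is the refinement inequality, specifically checking that inserting one new partition point into a subinterval preserves the ordering of the sums — and that verification is exactly where the monotonicity of $\Phi$, guaranteeing $\Phi(x_{k,r})-\Phi(x_{k,l})\ge 0$, is essential.
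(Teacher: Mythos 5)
Your proposal is correct and follows essentially the same route as the paper: both directions of the $\varepsilon$-criterion are handled via the chain $L(f,\Phi,\mathcal P)\le L(f,\Phi)\le U(f,\Phi)\le U(f,\Phi,\mathcal P)$ and a common refinement of two near-optimal partitions, and the sequential statement plus (4) follow by the same choice $\varepsilon=1/n$ and squeeze argument the paper uses. Your explicit attention to the refinement inequality and to $\Phi(x_{k,r})-\Phi(x_{k,l})\ge 0$ only makes precise what the paper cites as ``the monotonicity properties of the upper and lower sums.''
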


\begin{proof}
First, if $f$ is Riemann-Stieltjes integrable, given   $\varepsilon > 0$, there are partitions $\mathcal Q, \mathcal R$ of $I$, such that
\[U(f,\Phi,\mathcal Q) \le U(f,\Phi) +\varepsilon/2\,,\quad {{\rm and}}\quad L(f,\Phi) \le L(f,\Phi,\mathcal R) + \varepsilon/2\,,\]
and so, by the monotonicity properties of the upper and lower sums, for a  common refinement $\mathcal P$ of $\mathcal Q$ and $\mathcal R$, we have 
\[U(f,\Phi,\mathcal P) - L(f,\Phi,\mathcal  P) \le U(f,\Phi,\mathcal Q) - L(f,\Phi,\mathcal R) \le U(f,\Phi) - L(f,\Phi) + \varepsilon,\]
which, since $U(f,\Phi) = L(f,\Phi)$, gives (2).

Now, if (2) holds,  given  $\varepsilon > 0$, pick a partition
$\mathcal P$ of $I$ that satisfies (2). Then, since $U(f,\Phi)\le  U(f, \Phi,\mathcal P)$ and $L(f,\Phi, \mathcal P) \le L(f,\Phi)$,
we have
$0 \le U(f,\Phi) - L(f,\Phi) \le U(f,\Phi,\mathcal P) - L(f,\Phi,\mathcal P) <\varepsilon$,
which, since $\varepsilon > 0$ is arbitrary, implies that  $U(f,\Phi)= L(f,\Phi)$, and $f$ is Riemann-Stieltjes integrable.

As for the sequential characterization, if $f$ is Riemann-Stieltjes integrable,   for each integer $n$ pick a partition 
$\mathcal P_n$ of $I$ such that $0 \le  U(f,\Phi,\mathcal P_n) - L(f,\Phi, \mathcal P_n) \le 1/n\,$;
clearly  $\lim_n \big(U(f,\Phi,\mathcal P_n) -L(f,\Phi, \mathcal P_n)\big)= 0$.
Furthermore, since $U(f,\Phi) \le U(f,\Phi,\mathcal P_n)$ and $L(f,\Phi,\mathcal P_n) \le L(f,\Phi)$, it follows that
\[ U(f,\Phi, \mathcal P_n) - U(f,\Phi) = U(f,\Phi, \mathcal P_n) - L(f,\Phi)\le 
U(f,\Phi, \mathcal P_n) - L(f,\Phi, \mathcal P_n)\,,\]
and, consequently, $\lim_n U(f,\Phi,\mathcal P_n) = U(f,\Phi)$. Finally ,
\[\lim_n L(f,\Phi,\mathcal P_n) = \lim_n U(f,\Phi,\mathcal P_n) 
  - \lim_n \big(U(f,\Phi,\mathcal P_n) -
 L(f,\Phi,\mathcal P_n)\big) = \int_I f\,d\Phi,\]
 and (3) and (4) hold.

Conversely, clearly (3) implies (2), and so  $f$ is Riemann-Stieltjes  integrable.
\taf\end{proof}

Note that if (2) holds for a  partition ${\mathcal P}$, 
it also holds for  partitions  ${\mathcal{P}'}$ 
 finer than ${\mathcal{P}}$. This  observation also applies to other concepts introduced here, including (4), and by (36), also to (7)  and (8) below.

Now, since $\Phi$  is continous on $I$, $\Phi(I)=\mathcal I=[\Phi(a), \Phi(b)]$   is an interval   with endpoints $\Phi(a)$ and $ \Phi(b)$.  Note that each interval $\mathcal J=[y_1,y_2] \subset \mathcal I$ is of the form $[\Phi(x_1),\Phi(x_2)]$, where $\Phi(x_1)=y_1, \Phi(x_2)=y_2$, and $[x_1, x_2]$ is a subinterval of $I$. Moreover, partitions $\mathcal P$ of $I$  induce a corresponding partition $\mathcal Q$ of $\mathcal I$, and, conversely,  every partition of $\mathcal I$ can be expressed as $\mathcal Q$ for some partition $\mathcal P$ of $I$.

We prove next three basic results,  of independent interest,
on Riemann integration. 
The first result involves the notion of oscillation of a function.
Recall that, given a bounded function $g$ defined on $I$  and an interval $J\subset I$, the oscillation ${\rm {osc\, }}(g, J)$ of $g$ on $J$ is defined as 
$ {\rm {osc\, }}(g, J) = \sup_J g - \inf_J g$.

We then have \cite{Bruckner},  \cite{Hunter}, 
\begin{theorema} Let $g$ be a bounded function on $I$. Then, $g$ is Riemann integrable with respect to $\Phi$ on $I$ iff  given $\varepsilon >0$, there is a partition ${\mathcal{P}}=\{I_k\}$ of $I$, which 
may  depend on $\varepsilon$,  such that 
\begin{equation}  \sum_k {\rm {osc\, }}( g, I_k)\,\big(\Phi(x_{k,r})- \Phi(x_{k,l})\big)\le\varepsilon\,.
\end{equation}

Furthermore, a sequential characterization  holds, to wit,  $g$ is Riemann integrable with respect to $\Phi$ on $I$ iff there exists a sequence $\{\mathcal {P}_n\}$ of partitions of $I$ consisting of the intervals $\mathcal {P}_n=\{I_k^n\}$ such that 
\begin{equation*} \lim_n \sum_k {\rm {osc\, }}( g, I_k^n)\,\big(\Phi(x_{k,r}^n)- \Phi(x_{k,l}^n)\big)=0\,.
\end{equation*}
\end{theorema}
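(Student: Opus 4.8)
The plan is to recognize that this theorem is essentially a cosmetic reformulation of the Proposition already proved, with the oscillation sum identified as the gap between the upper and lower sums. The key observation is the term-by-term identity: since $\operatorname{osc}(g,I_k) = \sup_{I_k} g - \inf_{I_k} g$, on each subinterval $I_k$ of a partition $\mathcal{P}=\{I_k\}$ we have
\[
\operatorname{osc}(g,I_k)\,\big(\Phi(x_{k,r})-\Phi(x_{k,l})\big)
= \big(\sup_{I_k} g - \inf_{I_k} g\big)\,\big(\Phi(x_{k,r})-\Phi(x_{k,l})\big).
\]
Summing over $k$ and using the linearity of the sum, together with the definitions of $U(g,\Phi,\mathcal{P})$ and $L(g,\Phi,\mathcal{P})$, I would establish the identity
\[
\sum_k \operatorname{osc}(g,I_k)\,\big(\Phi(x_{k,r})-\Phi(x_{k,l})\big)
= U(g,\Phi,\mathcal{P}) - L(g,\Phi,\mathcal{P}).
\]
Here it is worth noting that $\Phi$ is increasing, so each factor $\Phi(x_{k,r})-\Phi(x_{k,l})$ is nonnegative and the regrouping causes no sign issues.

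With this identity in hand, the first equivalence is immediate: condition (5) asserts exactly that, given $\varepsilon>0$, there is a partition $\mathcal{P}$ with $U(g,\Phi,\mathcal{P}) - L(g,\Phi,\mathcal{P}) \le \varepsilon$, which is verbatim condition (2) of the Proposition applied to $g$. Hence (5) holds for some partition if and only if $g$ is Riemann integrable with respect to $\Phi$ on $I$, directly by the Proposition. No refinement or common-refinement argument is needed beyond what the Proposition already supplies.

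For the sequential characterization I would argue identically: substituting $\mathcal{P}_n=\{I_k^n\}$ into the same identity shows that
\[
\sum_k \operatorname{osc}(g,I_k^n)\,\big(\Phi(x_{k,r}^n)-\Phi(x_{k,l}^n)\big)
= U(g,\Phi,\mathcal{P}_n) - L(g,\Phi,\mathcal{P}_n),
\]
so the displayed limit in the theorem is precisely the limit in (3) of the Proposition. Thus the existence of a sequence of partitions driving this oscillation sum to zero is equivalent, again by the Proposition, to the integrability of $g$ with respect to $\Phi$. I do not anticipate a genuine obstacle here; the entire content is the bookkeeping identity relating the oscillation sum to $U - L$, and the only point requiring minimal care is confirming that the supremum and infimum defining $\operatorname{osc}(g,I_k)$ are taken over the same set $I_k$ that appears in the upper and lower sums, which is true by definition.
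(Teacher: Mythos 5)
Your proposal is correct and matches the paper's own proof: the paper likewise establishes the identity $U(g,\Phi,\mathcal P)-L(g,\Phi,\mathcal P)=\sum_k \operatorname{osc}(g,I_k)\big(\Phi(x_{k,r})-\Phi(x_{k,l})\big)$ (its equation (6)) and then reduces both the $\varepsilon$-partition and the sequential characterizations to conditions (2) and (3) of the Proposition. Nothing is missing.
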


\begin{proof}
Note that for each partition $\mathcal P=\{I_k\}$ of $I$,
\begin{equation} U(g,\Phi, \mathcal {P})-  L(g,\Phi, \mathcal {P})= \sum_k {\rm {osc\, }}(g, I_k)\,\big(\Phi(x_{k,r})- \Phi(x_{k,l})\big)\,.
\end{equation}

Now, if $g$ is Riemann-Stieltjes integrable, given $\varepsilon>0$, by (2), 
pick a partition ${\mathcal{P}}$ of $I$ such that  $ U(f,\Phi,{\mathcal{P}})-L(f,\Phi,{\mathcal {P}})\le \varepsilon$, and observe that (6) implies (5).  Conversely,   given $\varepsilon>0$, pick a partition such that (5) holds, and observe that by (6) also (2) holds, and consequently, $g$ is Riemann-Stieltjes integrable. 

The proof of the sequential convergence is analogous once we invoke (3), rather than (2), above,  and is left to the reader.
\taf
\end{proof}

For the special case of the Riemann integral, Theorem 1 expresses quantitatively the  fact that Riemann integrable functions are continuous a.e.\! with respect to the Lebesgue measure. It states that,  $ g$ is Riemann integrable  on $I$ iff given $\varepsilon>0$, there is a partition $\{\mathcal {P}\}$ of  $I$ such that 
\begin{equation}
\sum_k {\rm {osc\, }}( g, I_k)\,|I_{k}|\le \varepsilon\,,
\end{equation}
 iff there exists a sequence $\{\mathcal {P}_n\}$ of partitions of $I$ consisting of the intervals  $\mathcal {P}_n=\{I_k^n\}$ such that  
\begin{equation}
 \lim_n \sum_k {\rm {osc\, }}( g, I_k^n)\,|I_{k}^n|=0\,.
\end{equation}

The next result relates a Riemann-Stieltjes integral to a Riemann integral.  This  is of some interest because for  integrable $f$, the composition $f (\Phi)$ turns out to be Riemann-Stieltjes integrable, although  $f (\Phi)$ may fail to be   integrable,  even if $\Phi$ is continuous \cite{Gelbaum}, \cite{Kestelman}.

\begin{theoremb} Let  $f$ be a bounded function on $\Phi(I)=\mathcal I$. Then,  $f$ is integrable on $\mathcal I$ iff $ f(\Phi)$ is Riemann  integrable with respect to $\Phi$ on $I$,
and, in that case  we have 
\begin{equation} \int_{\mathcal I} f=\int_I f(\Phi)\, d\Phi\,.
\end{equation}
\end{theoremb}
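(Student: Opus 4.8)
The plan is to exploit the correspondence between partitions of $I$ and partitions of $\mathcal I$ furnished by $\Phi$, and to show that under this correspondence the Riemann--Stieltjes upper and lower sums of $f(\Phi)$ coincide \emph{exactly} with the ordinary Riemann upper and lower sums of $f$. Once this identity at the level of sums is established, the rest is bookkeeping: both integrability and the value of the integral are determined entirely by the upper and lower sums, so equality of the sums forces equality of the integrals and matches the integrability conditions.

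Concretely, I would first fix a partition $\mathcal P=\{I_k\}$ of $I$ with $I_k=[x_{k,l},x_{k,r}]$, and pass to the induced partition $\mathcal Q=\{\mathcal J_k\}$ of $\mathcal I$, where $\mathcal J_k=[\Phi(x_{k,l}),\Phi(x_{k,r})]$. The decisive point, and the one place where continuity of $\Phi$ is genuinely used, is that $\Phi$ maps $I_k$ \emph{onto} $\mathcal J_k$: since $\Phi$ is continuous and increasing, the intermediate value theorem gives $\Phi(I_k)=\mathcal J_k$. Consequently the two value sets $\{f(\Phi(x)):x\in I_k\}$ and $\{f(y):y\in\mathcal J_k\}$ are identical, so that $\sup_{I_k} f(\Phi)=\sup_{\mathcal J_k} f$ and $\inf_{I_k} f(\Phi)=\inf_{\mathcal J_k} f$. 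Because the Stieltjes weight $\Phi(x_{k,r})-\Phi(x_{k,l})$ is precisely the length $|\mathcal J_k|$, summing over $k$ yields
\[ U(f(\Phi),\Phi,\mathcal P)=U(f,\mathcal Q)\,,\qquad L(f(\Phi),\Phi,\mathcal P)=L(f,\mathcal Q)\,. \]

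With this identity in hand, I would transfer integrability using the characterization (2). For any $\varepsilon>0$, the identity shows that $U(f(\Phi),\Phi,\mathcal P)-L(f(\Phi),\Phi,\mathcal P)\le\varepsilon$ holds for some $\mathcal P$ iff $U(f,\mathcal Q)-L(f,\mathcal Q)\le\varepsilon$ holds for the corresponding $\mathcal Q$; since, as recorded after the Proposition, $\mathcal P\mapsto\mathcal Q$ is onto the partitions of $\mathcal I$, this immediately yields integrability in both directions. Equivalently, taking the infimum of the upper sums and the supremum of the lower sums over all partitions shows that the Stieltjes upper and lower integrals of $f(\Phi)$ equal the ordinary upper and lower integrals of $f$ on $\mathcal I$; these coincide precisely when $f$ is integrable, and the common value is formula (9).

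The only subtlety to watch, and not really an obstacle, is the case in which $\Phi$ is merely nondecreasing and happens to be constant on some $I_k$: then $\mathcal J_k$ degenerates to a point, $|\mathcal J_k|=0$, and that term contributes nothing to either sum, so the identity of sums is unaffected. The essential content is thus the surjectivity $\Phi(I_k)=\mathcal J_k$; granting that, Theorem 2 reduces to a term-by-term matching of upper and lower sums.
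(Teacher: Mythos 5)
Your proposal is correct and follows essentially the same route as the paper: both rest on the exact identities $U(f(\Phi),\Phi,\mathcal P)=U(f,\mathcal Q)$ and $L(f(\Phi),\Phi,\mathcal P)=L(f,\mathcal Q)$ under the partition correspondence $\mathcal P\mapsto\mathcal Q$, from which integrability in both directions and formula (9) follow. You merely make explicit two points the paper leaves tacit, namely the surjectivity $\Phi(I_k)=\mathcal J_k$ justifying $\sup_{I_k}f(\Phi)=\sup_{\mathcal J_k}f$, and the use of the fact that every partition of $\mathcal I$ arises from one of $I$.
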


\begin{proof}
Specifically,  (9) is understood to mean that, if either side of the equality exists, so does the other side and they are equal. 
To see this, let the  partition  $\mathcal Q =\{\mathcal I_k\}$ of $\mathcal I$ correspond to the partition $\mathcal P=\{ I_k\}$ of $I$ such that $\mathcal I_k =[\Phi(x_{k,l}),\Phi(x_{k,r}) ]$, where $I_k=[x_{k,l}, x_{k,r}]$. Then, since
$  \sup_{{\mathcal I}_k} f = \sup_{I_k} f(\Phi)$, and, $|{\mathcal I}_k| = \Phi(x_{k,r}) -\Phi(x_{k,l})$,  it readily follows that
\begin{align*}
U(f, \mathcal Q) &= \sum_k \big(\sup_{{\mathcal I}_k} f\big)\, |{\mathcal I}_k|\\
&= \sum_k 
 \sup_{I_k} f(\Phi) \big(\Phi(x_{k,r}) -\Phi(x_{k,l})\big) = U(f(\Phi), \Phi, \mathcal P)\,,
\end{align*}  
and, similarly,
$ L(f, \mathcal Q)= L(f(\Phi), \Phi, \mathcal P)\,.$ (9) follows at once from these identities.
\taf
\end{proof}

 The third result reduces the computation of a Riemann-Stieltjes integral to that of a Riemann integral. 
 Let $\varphi$ be a bounded, Riemann integrable function defined on $I=[a,b]$, and let $\Phi$ be an indefinite integral of $\varphi$ on $I$, i.e.,
\begin{equation} \Phi(x)=\Phi(a)+ \int_{[a,x]} \varphi\,,\quad  x\in I\,.
\end{equation}
Such functions  have been characterized in \cite{Thomson}.

We then have  \cite{Lopez1}, 

\begin{theoremc}  Let $\Phi$ be as in (10) with $\varphi$ positive, and let $g$ be a bounded function on $I$. Then,   $ g$ is Riemann integrable with respect to $\Phi$ on $I$ iff $g\,\varphi$ is Riemann integrable on $I$, and  in that case  we have  
\begin{equation}
\int_I g \,d\Phi=\int_{I} g\,\varphi \,,
\end{equation} 
in the sense that, if the integral on either side of (11) exists, so does the integral on the other side and they are equal.
\end{theoremc}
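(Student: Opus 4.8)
The plan is to compare, on a common partition $\mathcal P=\{I_k\}$, the two oscillation sums provided by Theorem 1, and to show that they differ only by a quantity controlled by the oscillation of $\varphi$. Writing $M=\sup_I|g|$ and recalling from (10) that $\Phi(x_{k,r})-\Phi(x_{k,l})=\int_{I_k}\varphi$, identity (6) gives
\[ U(g,\Phi,\mathcal P)-L(g,\Phi,\mathcal P)=\sum_k {\rm osc}(g,I_k)\int_{I_k}\varphi=:D_\Phi(\mathcal P), \]
while the Riemann oscillation sum of $g\varphi$ is
\[ U(g\varphi,\mathcal P)-L(g\varphi,\mathcal P)=\sum_k {\rm osc}(g\varphi,I_k)\,|I_k|=:D(\mathcal P). \]
Setting $E(\mathcal P)=\sum_k {\rm osc}(\varphi,I_k)\,|I_k|$, which tends to $0$ along suitable partitions since $\varphi$ is integrable, the crux is the two-sided estimate $|D(\mathcal P)-D_\Phi(\mathcal P)|\le 3M\,E(\mathcal P)$.

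For the bound $D\le D_\Phi+3M\,E$, I would first record the pointwise inequality ${\rm osc}(g\varphi,I_k)\le M\,{\rm osc}(\varphi,I_k)+(\sup_{I_k}\varphi)\,{\rm osc}(g,I_k)$, obtained by splitting $g(x)\varphi(x)-g(y)\varphi(y)=g(x)(\varphi(x)-\varphi(y))+(g(x)-g(y))\varphi(y)$ and using $|g|\le M$ together with $0\le\varphi\le\sup_{I_k}\varphi$. Since $(\sup_{I_k}\varphi)\,|I_k|\le \int_{I_k}\varphi+{\rm osc}(\varphi,I_k)\,|I_k|$ and ${\rm osc}(g,I_k)\le 2M$, multiplying by $|I_k|$ and summing yields $D\le D_\Phi+3M\,E$.

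The reverse inequality $D_\Phi\le D+2M\,E$ is the step I expect to be the main obstacle, since here the possible degeneracy $\inf_{I_k}\varphi=0$ must be circumvented; the point is that the relevant factor is $\sup_{I_k}\varphi$, not $\inf_{I_k}\varphi$. Bounding $\int_{I_k}\varphi\le(\sup_{I_k}\varphi)\,|I_k|$, it suffices to prove ${\rm osc}(g,I_k)\sup_{I_k}\varphi\le {\rm osc}(g\varphi,I_k)+2M\,{\rm osc}(\varphi,I_k)$. For this I would choose $x_1,x_2\in I_k$ with $g(x_1)$ near $\sup_{I_k}g$ and $g(x_2)$ near $\inf_{I_k}g$, and write, with $V=\sup_{I_k}\varphi$,
\[ (g(x_1)-g(x_2))\,V=\big(g(x_1)\varphi(x_1)-g(x_2)\varphi(x_2)\big)+g(x_1)(V-\varphi(x_1))-g(x_2)(V-\varphi(x_2)). \]
The last two terms are bounded by $M(V-\varphi(x_1))+M(V-\varphi(x_2))\le 2M\,{\rm osc}(\varphi,I_k)$, and the first by ${\rm osc}(g\varphi,I_k)$; letting $g(x_1)\to\sup_{I_k}g$ and $g(x_2)\to\inf_{I_k}g$ gives the claim. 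Summing against $|I_k|$ produces $D_\Phi\le D+2M\,E$.

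With $|D-D_\Phi|\le 3M\,E$ in hand, the equivalence follows from Theorem 1: along partitions (common refinements) for which $E\to0$, smallness of $D_\Phi$ forces smallness of $D$ and conversely, so $g$ is Riemann--Stieltjes integrable with respect to $\Phi$ iff $g\varphi$ is Riemann integrable. Finally, for (11) I would fix a sequence $\{\mathcal P_n\}$ of partitions, taken as common refinements, along which $E(\mathcal P_n)$, $D_\Phi(\mathcal P_n)$ and $D(\mathcal P_n)$ all tend to $0$. For any tags $t_k\in I_k^n$, the Riemann--Stieltjes sum $\sum_k g(t_k)\int_{I_k^n}\varphi$ and the Riemann sum $\sum_k g(t_k)\varphi(t_k)\,|I_k^n|$ each lie between the corresponding lower and upper sums, hence converge to $\int_I g\,d\Phi$ and $\int_I g\,\varphi$ respectively by (4); since their difference is at most $M\,E(\mathcal P_n)\to0$, the two integrals coincide, which establishes (11).
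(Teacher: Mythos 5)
Your argument is correct, and it reaches the same quantitative heart of the matter as the paper --- namely that the Darboux data of $g$ with respect to $\Phi$ and of $g\varphi$ over a common partition differ by at most a constant times $\sum_k{\rm osc}(\varphi,I_k)\,|I_k|$, which Theorem~1 makes small --- but you get there by a genuinely different device. The paper never estimates the oscillation of $g\varphi$ directly: it selects tags $\xi_k$ realizing $U(g\varphi,\mathcal P)$ to within $\varepsilon$, splits the indices according to the sign of $g(\xi_k)$, and in each case trades $\varphi(\xi_k)\,|I_k|$ for $\int_{I_k}\varphi=\Phi(x_{k,r})-\Phi(x_{k,l})$ at a cost of ${\rm osc}(\varphi,I_k)\,|I_k|$; this yields the one-sided inequality (15), which is applied to $-g$ and added to itself to produce (16), and the converse is run separately with the two kinds of sums switched. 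Your symmetric pointwise estimate, $\big|\,{\rm osc}(g\varphi,I_k)\,|I_k|-{\rm osc}(g,I_k)\int_{I_k}\varphi\,\big|\le 3M\,{\rm osc}(\varphi,I_k)\,|I_k|$, proved by the two algebraic splittings you display (both of which correctly exploit $\varphi\ge 0$ and avoid any appeal to $\inf_{I_k}\varphi>0$), delivers both directions of the equivalence at once and dispenses with the sign analysis and the $\varepsilon$-slack from tag selection, at the harmless price of the constant $3M$ in place of the paper's $2M_g$. The evaluation of the integral also differs: the paper takes $\limsup$ of (15) along partitions satisfying (8) and (4) and then repeats with $-g$ to get the reverse inequality, whereas you sandwich the tagged sums $\sum_k g(t_k)\int_{I_k}\varphi$ and $\sum_k g(t_k)\varphi(t_k)\,|I_k|$ between the corresponding lower and upper sums and note that their difference is at most $M\sum_k{\rm osc}(\varphi,I_k)\,|I_k|\to 0$; both are valid, and yours is arguably cleaner. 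One point worth making explicit in a final write-up: your use of common refinements tacitly relies on the fact that (2) and (7) persist under refinement, which the paper records separately just after the Proposition.
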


\begin{proof}
Assume first that $ g$ is Riemann-Stieltjes integrable, and fix $\varepsilon >0$. 
Then, for a partition $\mathcal P=\{I_k\}$ of  $I$ with $I_k=[x_{k,l},x_{k,r}]$, pick $ \xi_k\in I_k$ such that
\begin{equation} U( g\,\varphi,\mathcal P) \le \sum_k  g(\xi_k)\, \varphi(\xi_k)|I_k|+\varepsilon\,.
\end{equation}

There are two types of summands in (12),  to wit, those where
$g  (\xi_k)>0$ and those where $ g  (\xi_k) <0$. In the former case, note that
\begin{align} g (\xi_k)\varphi(\xi_k)|I_k| &= g  (\xi_k)\big( \varphi(\xi_k) -\inf_{I_k}\varphi \big)|I_k| +g  (\xi_k)\big( \inf_{I_k}\varphi\big)\, |I_k|\nonumber \\
\le g  &(\xi_k )\, {\rm {osc\, }}(\varphi, I_k)\,|I_k| + g  (\xi_k)\,\int_{I_k}\varphi 
\nonumber\\
&= g  (\xi_k)\, {\rm {osc\, }}(\varphi, I_k)\,|I_k| + g  (\xi_k)\,\big( 
\Phi(x_{k,r})-\Phi(x_{k,l})\big)\,, 
\end{align}
and in the latter case, since $\int_{I_k}\varphi\le \big(\sup_{I_k}\varphi \big)\, |I_k|$, 
it   follows that
\begin{align} g  (\xi_k)\varphi(\xi_k)|I_k| &= 
- g  (\xi_k)\big(\sup_{I_k}\varphi- \varphi(\xi_k)\big)|I_k| + (- g  (\xi_k)) 
(-\big( \sup_{I_k}\,\varphi\big)\, |I_k |\,)\nonumber\\
\le | g &(\xi_k)|\, {\rm {osc\, }}(\varphi, I_k)\,|I_k| +(- g (\xi_k))\,
\big( -\int_{I_k}\varphi \big)\nonumber
\\
&=| g  (\xi_k)|\, {\rm {osc\, }}(\varphi, I_k)\,|I_k| + g  (\xi_k)\,\big( 
\Phi(x_{k,r})-\Phi(x_{k,l})\big)\,. 
\end{align}

Hence, adding (13) and (14), with $M_g$ a bound for $g$,  we have
\begin{align*} \sum_k  g  (\xi_k) \varphi(\xi_k)|I_k| &\le M_g \sum_k  {\rm {osc\, }}(\varphi, I_k)\,|I_k| +\sum_k 
 g  (\xi_k)\,\big( \Phi(x_{k,r})-\Phi(x_{k,l})\big)\\
&\le M_g \sum_k   {\rm {osc\, }}(\varphi, I_k)\,|I_k|  + U(g, \Phi,\mathcal P)\,,
\end{align*}
which, by (12), implies that 
\begin{equation}  U(g\,\varphi,\mathcal P)\le M_g  \sum_k  {\rm {osc\, }}(\varphi, I_k)\,|I_k|  + U(g, \Phi,\mathcal P) +\varepsilon.
\end{equation}

Applying (15) to $-g$ gives  
\begin{equation*} -L(g\,\varphi, \mathcal P) \le 
 M_g \sum_k {\rm {osc\, }}(\varphi, I_k)\,|I_k|  - L(g, \Phi,\mathcal P) +\varepsilon,
\end{equation*}
and adding to (15) we get 
\begin{align} U(g\,\varphi,\mathcal P) &-  L(g\,\varphi,\mathcal P)\nonumber\\
&\le 2\, M_g \sum_k  {\rm {osc\, }}(\varphi, I_k)\,|I_k| +\big(
 U(g, \Phi,\mathcal P) -  L(g, \Phi,\mathcal P)\big)  +2\,\varepsilon.
\end{align}

Let $\mathcal {P}$ be a  partition of $I$ that satisfies simultaneously 
(7) for $\varphi$ and (2) for $g$ with respect to $\Phi$ for the $\varepsilon>0$ picked at the beginning of the proof; a common refinement of a partition that satisfies  
(7) for $\varphi$ and one that satisfies (2) for $g$ with respect to $\Phi$ will do.   Then from (16) it readily follows that
$ U(g\,\varphi,\mathcal P) -  L(g\,\varphi,\mathcal P)\le 2\, M_g\,\varepsilon + \varepsilon +2\, \varepsilon,
$ and, therefore, since $\varepsilon>0$ is arbitrary, by (2),  $g\varphi$ is integrable on $I$.    

It only remains to evaluate the integral in question. Let $\{\mathcal {P}_n\}$ be a sequence of partitions of $I$ that satisfies simultaneously (8)  for $\varphi$ and (4) for $g$. Then, given $\varepsilon>0$,  from (15) it follows that
\begin{align*}\int_I g\,\varphi&= U(g\,\varphi)\le \limsup_n U(g\,\varphi, \mathcal P_n)\\
&\le \limsup_n  M_g \sum_k  {\rm {osc\, }}(\varphi, I_k^n)\,|I_k^n|+ \limsup_n  U( g\,,\Phi\,,\mathcal P_n)+\varepsilon,\\
& =\int_I g\,d\Phi +\varepsilon,
\end{align*}
which, since $\varepsilon$ is arbitrary, gives
$ \int_I g \,\varphi \le  \int_I g\,d\Phi\,.$ 
Furthermore,   replacing $g$ by $-g$  it follows that $ \int_I g\,d\Phi\le \int_I g\,\varphi \,,$ 
(11) holds, and the conclusion obtains.

The proof of the converse requires no new ideas and we will be brief. Assume that $g\varphi$ is integrable on $I$,  let   
 $\mathcal P =\{I_k\}$ be a partition of $I$, and, given $\varepsilon>0$, pick
$\xi_k\in I_k$ such that
\begin{equation*}
U (g,\Phi,\mathcal P)= \sum_k \big( \sup_{I_k } g\big)\, \big ( \Phi(x_{k,r}) -\Phi( x_{k,l})\big)
\le \sum_{k } g (\xi_k)  \int_{I_k} \varphi +\varepsilon.
\end{equation*}

Proceeding as in the first part of the proof we arrive at an analogous relation   to (16), but with the Riemann sums for $g$ and  the Riemann sums for $g$ with respect to $\Phi$, switched, to wit, 
\begin{align*} U(g, \Phi,\mathcal P) &-  L(g, \Phi,\mathcal P)\\
&\le 2\, M_g \sum_k  {\rm {osc\, }}(\varphi, I_k)\,|I_k| +\big(U(g\,\varphi,\mathcal P)  -  L(g\,\varphi,\mathcal P)\big) +2\,\varepsilon.
\end{align*}
As above  we conclude that $ g$ is Riemann-Stieltjes integrable and, therefore, invoking the first part of the proof,  $ \int_I g\,d\Phi$
$= \int_I g\,\varphi $,  (11) holds, and the proof is finished.
\taf
\end{proof}

\begin{theoreme}
\end{theoreme}
We are now ready to prove the  change of variable formula. It is at this juncture  that we drop the assumption that $\varphi$ is positive  and allow it to change signs; thus, the substitution is not required to be invertible. Then $\Phi(I)$, the range of $\Phi$, is an interval, but $\Phi(a), \Phi(b)$ are not necessarily endpoints of this interval. 
It is important to keep in mind  that the Riemann integral is oriented, and that the direction in which the interval is traversed determines the sign of the integral. 
Also note that the assumption that  $f$ is  bounded is   necessary,  as a simple example shows \cite{Sarkel}.  And, some care must be exercised  since  for $f(\Phi)\varphi$   integrable on $I$ and $\varphi$   continuous on $I$, it does not follow    that $f(\Phi)$ is    integrable on $I$, \cite{Kestelman}. 

The proof is carried out in two parts,  when $\varphi$ is of constant sign, and  when $\varphi$ is of variable sign. In the former case, suppose first that $\varphi$ is positive. Then, if $f$ is integrable on $\mathcal I$,   by Theorem 2, $ f(\Phi)$ is Riemann-Stieltjes integrable, and $\int_{\mathcal I}f=\int_I 
f(\Phi)\,d\Phi$. And, by Theorem 3 with $g=f(\Phi)$ there,   $ f(\Phi)\,\varphi$ is Riemann integrable on $I$, and 
$\int_I f(\Phi)\,d\Phi=\int_I f(\Phi)\,\varphi$. This chain of arguments shows that if $f$ is  integrable on $\mathcal I$,  $ f(\Phi)\,\varphi$ is  integrable on $I$,  and
$  \int_{\mathcal I}f= \int_I f(\Phi)\,\varphi\,.$   
Moreover, since all the steps in the above argument are reversible, the converse also holds, and the substitution formula has been established in this case. 

When $\varphi$ is negative, let $\psi(x)= -\varphi(x)$, and $\Psi(x)=-\Phi(x)$.  Then by (1) applied to $g(x)=f(-x)$, it follows that
\begin{equation} \int_{[\Psi(a), \Psi(b)]} g= \int_I  g(\Psi)\,\psi \,,
\end{equation}
where the left-hand side of (17) is equal to
$ \int_{[-\Phi(a), -\Phi(b)]} g =- \int_{[\Phi(a), \Phi(b)]} f
\,,$ 
and the right-hand side of (17)  equals
$ \int_I  g(\Psi)\,\psi = \int_I  g(- \Phi)\,(-\varphi) =
-  \int_I  f( \Phi)\,\varphi\,.$
Hence, the substitution formula holds when $\varphi$ is of constant sign, and the first part of the proof is finished.

Next, consider when $\varphi$ is of variable sign. First,  assume  that $f$ is  integrable on $\Phi( I)$. The idea is to show that $\int_{\Phi(I)}f$  can be approximated arbitrarily close by the Riemann sums of $f(\Phi)\varphi$ on $I$, and, consequently,  $\int_I f(\Phi)\varphi$  also exists, and the integrals are equal \cite{Bagby}, \cite{Pries}. To make this argument precise we  begin by introducing the partitions used for the approximating Riemann sums. They are based on a partition  $\mathcal P$ of $I$ defined as follows: given $\eta >0$, by (7)  there is a partition $\mathcal P=\{I_k\}$  of $I$, such that
\begin{equation} \sum_{k} {\rm {osc\, }}(\varphi,  I_k)\, |I_k|\le \eta^2 |I|\,.
\end{equation}

We  first separate the indices $ k$ that appear in $\mathcal P$ into  three classes, the (good) set $G$, the (bounded) set $B$, and the (undulating) set $U$,   according to the following criteria. First, $k\in G$ if  $\varphi$ is strictly positive or negative on $I_k$. Next, $k\in B$, if $k\notin G$ and $|\varphi|\le \eta$ on $I_k$. And, finally, $k\in U$, if $k\notin G\cup B$.  Note that for $k\in U$, since $\varphi$ changes signs in $I_k$ and for at least one point $\xi_k$ there, $| \varphi(\xi_k)|> \eta$, we have  $ {\rm {osc\, }}(\varphi,  I_k)\ge \eta$. 

Recall that each $I_k=[x_{k,l},x_{k,r}]$ in $\mathcal P$   corresponds to the (oriented) subinterval  $\mathcal I_k=[\Phi(x_{k,l}),\Phi(x_{k,r})]$
of $\Phi(I)$. Now, since  $f$ is  integrable on $\Phi( I)$, $f$ is  integrable on $\mathcal I_k$, and if $k\in G$,  by the first part of the proof, $ f(\Phi)\varphi $ 
is integrable on $I_k$, and 
$\int_{\mathcal I_{k}} f = \int_{I_k}f(\Phi)\,\varphi$.
Then, by (2), given $\eta>0$, there is a partition  $\mathcal P^k= \{I_j^k\}$ of $I_k$ such that 
\begin{equation*} U(f(\Phi)\varphi,\mathcal P_k)-L(f(\Phi)\varphi,\mathcal P_k)=
\sum_j {{\rm osc}}\,( f(\Phi)\varphi,I_j^k )\, | I_j^k| \le \eta \,|I_k|\,.
\end{equation*} 
Moreover, since $  \int_{\mathcal I_k} f\le  U(f(\Phi)\varphi, \mathcal P^k)$, 
we also have
\begin{equation*} 
 U(f(\Phi)\varphi, \mathcal P^k) -\int_{\mathcal I_k} f \le\eta\,|I_k|\,.
\end{equation*}

Hence, for $k\in G$,
\begin{equation} 
{{\sum_{k\in G}\sum_j \rm osc}}\,(f(\Phi)\varphi, I^k_j)\, |  I^k_j|\le \eta \sum_{k\in G} |I_k|,
\end{equation}
and
\begin{equation}
\sum_{k\in G}\Big| \int_{\mathcal I^k} f  - U(f(\Phi)\varphi,\mathcal P^k)\Big| \le  \eta \sum_{k\in G} |I_k|. 
\end{equation}

 Now, for $k\in B\cup U$, let $\mathcal P^k=\{I_k\}$ denote the partition of $I_k$ consisting of the interval $I_k$. Note that, with $M_{\varphi}$  a bound for $\varphi$,  
\begin{equation}|\mathcal I_{k}|=|\Phi(x_{k,r})-\Phi(x_{k,l})|\le \int_{[x_{k,l},x_{k,r}]}|\varphi| \le M_{\varphi}\,|I_k|\,,
\end{equation}
and,  with $M_f$ a bound for $f$, that
\begin{equation}
\Big| \int_{\mathcal I_{k}} f\Big|\le M_f M_{\varphi}\,|I_k|\,.
\end{equation}

First, observe that
\begin{equation}
{{\rm osc }}\, (f(\Phi)\varphi,  I_k)\,| I_k| =\big(\sup_{ I_k} f(\Phi)\varphi  - \inf_{ I_k} f(\Phi)\varphi\big) | I_k|  \le 
2 M_fM_\varphi\,|I_k|\,.
\end{equation}

Next, by (21) and (22), for  $\xi_k\in I_k$,
\begin{equation} \Big| \int_{\mathcal I_{k}} f - f(\Phi(\xi_k))\varphi(\xi_k)\,|I_k|
\Big|\le  2\,  M_f\, M_{\varphi}\, |I_k|\,,
\end{equation}
and so,  picking  $\xi_k\in I_k$ appropriately,  we have
\begin{equation}\Big| \int_{\mathcal I_{k}} f - U( f(\Phi)\varphi\,, \mathcal P^k)\Big|
\le 3\,  M_f\, M_{\varphi}\, |I_k|\,.
\end{equation}

Now, if $k\in B$,  $M_\varphi\le\eta$, and, therefore, from (23)  we get  that
\begin{equation}
\sum_{k\in B}{{ \rm osc}}\,(f(\Phi)\varphi, I^k)\, |  I^k_j|\le   2 M_f\, \eta\sum_{k\in B} |I_k|,
\end{equation}
and by (25),
\begin{equation}
\sum_{k\in B}\Big| \int_{\mathcal I_{k}} f - U(f(\Phi)\varphi, \mathcal P^k)\Big|\le 3\,  M_f\, \eta\, \sum_{k\in B}|I_k| 
\end{equation}
 
Finally, since for $k\in U$ we have osc\,($\varphi, I_k)\ge\eta$, as in Chebychev’s inequality,  from (18)    it follows that
\[\eta\sum_{k\in U}|I_k| \le\sum_{k\in U} {\rm {osc\, }}(\varphi,  I_k)\, |I_k|\le \sum_{k} {\rm {osc\, }}(\varphi,  I_k)\, |I_k|\le \eta^2 |I|\,,\]
and, consequently,
\begin{equation} \sum_{k\in U} |I_k|\le  \eta\,|I|\,.
\end{equation} 

Whence, by (23) and (28), the $U$ terms are bounded by 
\begin{equation}
\sum_{k\in U}{{ \rm osc}}\,(f(\Phi)\varphi, I^k)\, |  I^k|\le   2\, M_f\, M_\varphi \sum_{k\in U} |I_k|
\le   2\, M_f\, M_\varphi \eta\, |I|,
\end{equation}
and by (25) and (28), 
\begin{equation}\sum_{k\in U} \Big| \int_{\mathcal I_{k}} f -    U( f(\Phi)\varphi\,, \mathcal P^k) \,  \Big|\le  3\, M_f M_{\varphi}\,\sum_{k\in U}  |I_k| 
  \le 3\, M_f\,M_{\varphi}\,\eta\, |I|\,.
\end{equation}

Consider now the partition $\mathcal P'$ of $I$ that consists of the union of all the partitions $\mathcal P^k$,  where each $\mathcal P^k$ 
is defined  according as to whether $k \in G, k \in B$, or $k\in U$. Then, by (19), (26), and (29),
\begin{align}\sum_{k\in G}\sum_j {{\rm osc}}\, (f(\Phi)\varphi,\mathcal I_j^k )\,&|\mathcal I_j^k|  \nonumber\\
+\sum_{k\in B}
{{\rm  osc}} \, (f(\Phi)\varphi&, \mathcal I^k)\,|\mathcal I^k|+ \sum_{k\in U} {{\rm osc}}\, (f(\Phi)\varphi, \mathcal I^k) |\mathcal I^k|\nonumber
\\
\le \eta\sum_{k\in G}|I_k|+& 2\, M_f \,\eta \sum_{k\in B}|I_k| + 2\, M_f M_{\psi}\,\eta\, |I|\nonumber
\\
\le 
 \big( 1& + 2\, M_f  + 2\, M_f  M_\varphi\big)\,\eta\,|I|. 
\end{align}

Given $\varepsilon>0$, pick $\eta > 0$ so that
$( 1 + 2\, M_f+ 2\, M_f  M_{\varphi})\, \eta\,|I|\le \varepsilon$, 
and note that the above expression is $<\varepsilon$, and 
 since $\varepsilon>0$ is arbitrary,  (4) corresponding to $\mathcal P'$  implies that $f(\Phi)\varphi$ is Riemann  integrable, and $L(f(\Phi)\varphi,\Phi)= U(f(\Phi)\varphi,\Phi) =\int_I f(\Phi)\,\varphi$.

It remains to compute the integral in question. First, note that
\begin{equation}U(f(\Phi)\varphi, \mathcal P') =\sum_k U(f(\Phi)\varphi, \mathcal P^k)\,.
\end{equation}
Moreover,  since
$\Phi(b)-\Phi(a)=\sum_k \big(\Phi(x_{k,r})-\Phi(x_{k,l})\big)$,
by the linearity of the integral,
taking  orientation into account,   
 it follows that $\int_{\mathcal I}f =\sum_{k} \int_{\mathcal I_{k}} f$, \cite{Robbins}, \cite{Thomson1}. Hence, regrouping according to the sets $G, B$ and $U$, gives 
\begin{equation}
\int_{\mathcal I} f =\sum_{k\in G} \int_{\mathcal I_{k}} f + \sum_{k\in B} \int_{\mathcal I_{k}} f +\sum_{k\in U} \int_{\mathcal I_{k}} f \,, 
\end{equation}
and, from (32) and (33), it follows that
\begin{align*} \Big|\int_{\mathcal I} f -  U( f(\Phi)\varphi\,, \mathcal P')\Big| &\le  
 \sum_{k\in G}\Big| \int_{\mathcal I_k} f -  U( f(\Phi)\varphi\,, \mathcal P^k)\Big|
\\
+  \sum_{k\in B}\Big| \int_{\mathcal I_k} f &-  U ( f(\Phi)\varphi\,, \mathcal P^k)\Big|
+  \sum_{k\in U}\Big| \int_{\mathcal I_k} f - U( f(\Phi)\varphi\,, \mathcal P^k)\Big|\\
&= s_1 +s_2+ s_3\,,
\end{align*}
say.   
Now, by (20), 
\[ s_1\le  \sum_{k\in G} \Big| \int_{\mathcal I_k}  f -  U( f(\Phi)\varphi\,, \mathcal P^k) \Big| 
\le  \eta\, \sum_{k\in G} | I_k|\le \eta\,|I| \,,\]
and by (27) and (30),
$ s_2 +s_3 \le \big(  3 M_f +  3 M_fM_{\varphi} \big)\,\eta\,|I|\,,$
which combined give \[\Big| \int_{\mathcal I} f  - U(f(\Phi))\varphi, \mathcal P')\Big| \le    ( 1 + 3\, M_f + 3\, M_fM_{\varphi})\,\eta\,|I|\,.\]  

Given $\varepsilon>0$, pick $\eta > 0$ so that
$( 1 + 3\, M_f+ 3\, M_f M_{\varphi})\, \eta\,|I|\le \varepsilon$, 
 note that this $\varepsilon$ also works for (31), and that 
\begin{equation}\Big| \int_{\mathcal I}f- U( f(\Phi)\varphi\,, \mathcal P')\Big|\le  \varepsilon.
\end{equation}
Also, since $ U( f(\Phi)\varphi\,, \mathcal P')- L( f(\Phi)\varphi\,, \mathcal P')$ is equal to the left-hand side of (31), from (34) it follows that
\begin{equation}\Big| \int_{\mathcal I}f- L( f(\Phi)\varphi\,, \mathcal P')\Big|\le 2\, \varepsilon.
\end{equation}

Furthermore, since by (34),
\[\int_I f(\Phi)\varphi = U(f(\Phi)\varphi) \le U(f(\Phi)\varphi, \mathcal P') \le
\int_{\mathcal I} f + \varepsilon,\] 
and by (35), 
\[\int_{\mathcal I} f \le  L(f(\Phi)\varphi, \mathcal P')+ 2\,\varepsilon \le 
 L(f(\Phi)\varphi)+ 2\,\varepsilon = \int_I f(\Phi)\varphi+2\,\varepsilon,\]   we conclude that 
 \[\Big|\int_{\mathcal I} f-\int_I f(\Phi)\varphi\, \Big|\le 2\,\varepsilon,
 \] which, since $ \varepsilon$  is arbitrary, implies that
$\int_I f(\Phi)\varphi =\int_{\mathcal I} f\,.$ Hence, (1) holds, and the proof of this implication is finished.

As for the converse, 
it suffices to prove that if $ f(\Phi)\varphi$ is integrable on $I$,   $f$ is integrable on $\Phi(I)$, and  then invoke   the  implication we just proved. 

Let $\mathcal P$ be a partition  of $I$ that satisfies (18). Since $\Phi$ is continuous, $\Phi(I)$ is a closed interval of the form $ [ \Phi(x_m),\Phi(x_M) ]$  with (possibly non-unique) $x_m,x_M$ in $I$. If  $x_m$ and $x_M$  are endpoints of (not necessarily the same) interval in $\mathcal P$, proceed. Otherwise, since 
for  an interval $ J=[x_l,x_r]$ and  an interior  point $x$ of $  J$, with $  J_l=[x_l,x]$ and $  J_r=[x,x_r]$  we have 
\begin{equation}{{\rm osc}}\,(\varphi,  J_l)\,|  J_l|+ {{\rm osc}}\, (\varphi,  J_r)\,|  J_r|\le  {\rm osc}\, (\varphi,  J)\,|  J|\,,
\end{equation}
 $\mathcal P$ can be refined so that  the endpoint that was not  originally  included is now an endpoint
of two intervals of the new partition, without increasing the right-hand side
of (18).  For simplicity  also denote this new partition $\mathcal P$,   note that it contains both $x_m$ and $x_M$ at least once as an endpoint of one of its intervals, and define  the sets of indices   $G,B$, and $U$  associated to $\mathcal P$, as above. 

Now, if $f(\Phi)\varphi$ is  integrable on $I$, $ f(\Phi)\varphi$ is integrable on $I_k$, and, if $k\in G$,  by the first part of the proof, $ f $ 
is integrable on $\mathcal I_k$ and
$\int_{I_k}f(\Phi)\,\varphi =\int_{\mathcal I_{k}} f$. 
Then, by (7), given $\eta>0$, there is a partition  $\mathcal Q^k= \{{\mathcal I}_j^k\}$ of $\mathcal I_k$, such that 
\begin{equation*} \sum_j {{\rm osc}}\, (f, \mathcal I_j^k)\,|\mathcal I_j^k|\le \eta \,|I_k|\,,
\end{equation*}
and, therefore,
\begin{equation} \sum_{k\in G}\sum_j {{\rm osc}}\, (f, \mathcal I_j^k)\,|\mathcal I_j^k|\le \eta \,\sum_{k\in G}|I_k|\,.
\end{equation}

As for $k\in B\cup U$,  by (21) we get 
\[{{\rm osc }}\, (f, \mathcal I_k)\,|\mathcal I_k| =\big(\,\sup_{\mathcal I_k} f  - \inf_{\mathcal I_k} f \,\big) |\mathcal I_k|  \le 
2 M_fM_\varphi\,|I_k|\,.
\]

Next, if $k\in B$,   $M_\varphi\le\eta$, and, therefore, 
\begin{equation} \sum_{k\in B} {{\rm osc}}\, (f, \mathcal I_k)\,|\mathcal I_k|\le  2 M_f \,\eta \sum_{k\in B}|I_k|\,.
\end{equation}

Finally, for $k\in U$, by (28), $\sum_{k\in U} |I_k|\le \eta\,|I|\,,$ and so,
\begin{equation}\sum_{k\in U} {{\rm osc}}\,( f,\mathcal I_k) |\mathcal I_k| \le 2\, 
 M_f\, M_{\varphi}\sum_{k\in U} |I_k|\le  2\, M_f\,M_{\varphi}\,\eta\, |I|\,.
\end{equation}

Let $\mathcal Q'$ denote the collection   of subintervals of $\Phi(I )$  defined by $\mathcal Q'= \big( \bigcup_{k\in G} \bigcup_j  \{\mathcal I_j^k\} \big) \cup \big( \bigcup_{k\in B\cup U}\{\mathcal I_k\}\big).$ 
Note that the union of the intervals in $\mathcal Q'$ is $\Phi(I)$  and that, by (37), (38), and (39),  \begin{align}\sum_{k\in G}\sum_j {{\rm osc}}\, (f,\mathcal I_j^k &)\, |\mathcal I_j^k|  +\sum_{k\in B}
{{\rm  osc}} \, (f, \mathcal I^k)\,|\mathcal I^k|+ \sum_{k\in U} {{\rm osc}}\, (f, \mathcal I^k) |\mathcal I^k|\nonumber
\\
\le \eta\sum_{k\in G}&|I_k|+ 2 M_f \eta \sum_{k\in B}|I_k| + 2 M_f M_\varphi \eta\,|I|\nonumber
\\
&\le 
 \big( 1 + 2 M_f + 2 M_fM_\varphi\big)\,\eta\,|I|\,. 
\end{align}

Consider now the finite set  $\Phi(x_m)=y_1< y_2<  \cdots  <\Phi(x_M)=y_l$,  of   the endpoints of the intervals in $\mathcal Q'$  arranged in an increasing fashion, without repetition. Suppose that the interval $\mathcal J$ in $\mathcal Q'$ contains the points $y_{k_1}, \ldots, y_{k_n}$, say,  as endpoints or interior points. If they are endpoints, disregard them. Otherwise, as in (36),  incorporate each, from left to right,  as an  endpoint of two intervals in a refined  $\mathcal Q'$ without  increasing the right-hand side of (40). Clearly $\mathcal Q'$ thus refined contains a partition $\mathcal Q'' =\{\mathcal J_k\}$ of $\Phi(I)$, which, by    (40), satisfies,
\[ \sum_k {\rm osc}\,(f,\mathcal J_k)\,|\mathcal J_k|\le \big( 1 + 2 M_f + 2 M_fM_\varphi\big)\, \eta\,|I|. 
\]

Given $\varepsilon>0$, pick $\eta>0$ such that  $\big( 1+ 2 M_f +2 M_f M_{\varphi}\big)\, \eta\,|I|\le \varepsilon$.
 Then  the sum in (7) corresponding to $\mathcal Q''$ does not exceed an arbitrary $\varepsilon>0$, and, therefore, $f$ is integrable on $\Phi(I)$, and the proof is finished. \taf

A  caveat to the reader: not always the most general result is the most useful. By strengthening some assumptions and  weakening others in the Change of Variable Formula, it is possible to obtain a  substitution formula that does not follow from this result \cite{deO}. The same is true for Theorem 3.

Assume  that the function $\Phi$ is continuous,  non-decreasing on $I=[a,b]$, and differentiable on $(a,b)$  with derivative $\varphi\ge 0$;  then $\Phi$ is  uniformly continuous on $I$, and  maps $I$ onto $\mathcal I=[\Phi(a),\Phi( b)]$. We will also assume that $f$ is  Riemann integrable, rather than bounded, on $ \mathcal I$. On the other hand, we will not assume that (10) holds,  nor that $\varphi$ is bounded.
Then,  if   $f(\Phi)\varphi$ is integrable on $I$, the change of variable  formula holds.



To see this, consider a partition $\mathcal P=\{I_k\}$, $I_k=[x_{k,l},x_{k,r}]$, of $I$, and  the corresponding partition $\mathcal Q=\{\mathcal I_k\}$ of $\mathcal I$.
By the mean value theorem  there exist $\xi_k'\in I_k$ such that 
\begin{equation*}
y_{k,r}-y_{k,l}= \Phi(x_{k,r})- \Phi(x_{k,l}) = \varphi(\xi_k')\, \big(x_{k,r}-x_{k,l}\big)\,,\quad {{\rm all }}\ k\,,
\end{equation*}
and, therefore, with $\Phi(\xi_k' )=\zeta_k'\in\mathcal I_k$, 
\[ \sum_k   f(\Phi(\xi_k' ))\,  \varphi(\xi_k' )\,|I_k|= \sum_k f(\zeta_k')\,\big( y_{k,r} -  y_{k,l} \big),
\]
where the left-hand side is a Riemann sum of $f(\Phi)\,  \varphi$ on $I$, and the right-hand side a Riemann sum of $f$ on $\mathcal I$.
Since by the  uniform continuity of $\Phi$ it follows that $\max_k |I_k|\to 0$  
implies $\max_k|\mathcal I_k|\to 0$,  by the integrability assumptions, for appropriate partitions $\mathcal P$ the left-handside above tends to
$\int_I f(\Phi)\varphi$, and the right-hand side to $\int_{\mathcal I}f$. Hence the change of variable formula holds. 

This observation applies in the  following setting. On $I=\mathcal I=[0,1]$, with $0<\varepsilon<1$,  let $ \Phi(x)= x^{1-\varepsilon}$, and $\varphi(x)= (1-\varepsilon)\, x^{-\varepsilon}$ for $x\in (0,1]$, which is unbounded.
Then, for an integrable function $f$ on $\mathcal I$, provided that $f(\Phi)\,\varphi$ is integrable on $I$, the substitution formula holds. For $f$ we may take a  continuous function of order $x^\beta$ near the origin,  where $\beta\ge \varepsilon/(1-\varepsilon)$.


\begin{thebibliography}{99}
\bibitem{Bagby} {Bagby, R. J.,} {The substitution theorem for Riemann integrals}. \textit{Real Anal. Exchange} 27 (1),  (2001/02), 309–-314.
\bibitem{Bruckner} {Bruckner, A. M., Bruckner, J. B.} and {Thomson, B. S.,}  {\em Real Analysis, Second Edition}, ClassicRealAnalysis.com., 2008. 
\bibitem{Davies} {Davies, R. O.,}  {An elementary proof of the theorem on change of variable in Riemann integration}. \textit{Math. Gaz.} 45, (1961),  23-–25.
\bibitem{deO} {de Oliveira, O. R. B.,} {Change of variable for the Riemann integral on the real line}, 
{\textit {Preprint,}}  (2019)
\bibitem{Gelbaum} {Gelbaum, B. R.} and {Olmsted, J. M. H.,} {\em Counterexamples in Analysis},  Holden--Day, San Francisco,  1965.
\bibitem{Hunter} {Hunter, J. K.,}   {\em Introduction to Analysis}, {\rm Undergraduate Lecture Notes}, U C Davis, (2010).
\bibitem{Kestelman} { Kestelman, H.,}  {Change of variable in Riemann integration}, \textit{Math. Gaz.} 45, (1961), 17--23.
\bibitem{Lopez} {L\'{o}pez Pouso, R.,}  {Riemann integration via primitives for a new proof
to the change of variable theorem,} (2011),  \textit{arXiv:1105.5938v1 [math.CA]}
\bibitem{Lopez1} {L\'{o}pez Pouso, R.,}  {Existence and computation of Riemann–Stieltjes
integrals through Riemann integrals,} (2011), \textit{arXiv:1107.1996v1 [math.CA]}
\bibitem{Navra} {Navr\'{a}til, J.} { A note on the theorem on change of variable in a Riemann integral (Czech)}. \textit{\v Casopis P\v est. Mat.} 106, (1981), 79-–83. 
\bibitem{Pries} {Preiss, D.} and {Uher, J.,}  {A remark on the substitution for the Riemann integral. (Czech)} \textit{\v Casopis P\v est. Mat.} 95, (1970), 345-–347.
\bibitem{Robbins} {Robbins, H. E.,} {A note on the Riemann integral}. \textit{Amer. Math. Monthly} 50 (10), (1943),   617-618, https://www.jstor.org/stable/2303804. 
\bibitem{Sarkel} {Sarkhel, D. N.} and {V\'yborn\'y, R.,} {A change of variables theorem for the Riemann integral}. \textit{Real Anal. Exchange} 22 (1),  (1996/97), 390–-395.
\bibitem{Tandra} {Tandra, H.,}  {A new proof of the change of variable theorem for the Riemann Integral}. \textit{Amer. Math. Monthly} 122 (8), (2015), 795--799, DOI: 10.4169/amer.math.monthly.122.8.795.
\bibitem{Tandra1} {Tandra, H.,} {Corrigendum to “A new proof of the change of variable theorem for the Riemann integral.”} \textit{Amer. Math. Monthly} 123 (10), (2016), 1049.
\bibitem{Thomson} {Thomson, B. S.,}  {Characterizations of an indefinite Riemann integral}. \textit{Real Anal. Exchange} 35 (2), (2009), 487--492.
\bibitem{Thomson1} {Thomson, B. S.,}  {On  Riemann sums}. \textit{Real Anal. Exchange} 37 (1), (2011/2012), 221--242.
\bibitem{Torch} {Torchinsky, A.}, {Real variables}, Addison-Wesley Publishing Co., Redwood City, CA, 1988. 
\end{thebibliography}
\end{document}